\newcommand{\Real}{\mathbb R}
\newtheorem{definition}{\textbf{Definition}}
\newtheorem{assumption}{\textbf{Assumption}}
\newtheorem{remark}{\textbf{Remark}}
\newtheorem{theorem}{\textbf{Theorem}}
\title{\LARGE \bf
Hierarchical and Safe Motion Control for Cooperative Locomotion of Robotic Guide Dogs and Humans: A Hybrid Systems Approach}
\author{Kaveh Akbari Hamed$^{1}$, Vinay R. Kamidi$^{1}$, Wen-Loong Ma$^{2}$, Alexander Leonessa$^{1}$, and Aaron D. Ames$^{2}$
\thanks{*The work of K. Akbari Hamed amd V. R. Kamidi is supported by the National Science Foundation (NSF) under Grant Number 1637704/1854898. The work of A. D. Ames is supported by the NSF under Grant Numbers 1544332, 1724457, and 1724464 as well as Disney Research LA. The content is solely the responsibility of the authors and does not necessarily represent the official views of the NSF.}
\thanks{$^{1}$K. Akbari Hamed, V. R. Kamidi, and A. Leonessa are with the Department of Mechanical Engineering, Virginia Tech, Blacksburg, VA 24061 USA {\tt\small kavehakbarihamed@vt.edu}, {\tt\small vinay28@vt.edu} and {\tt\small aleoness@vt.edu}}%
\thanks{$^{2}$W. Ma and A. D. Ames are with the Department of Mechanical and Civil Engineering, California Institute of Technology, Pasadena, CA 91125 USA {\tt\small wma@caltech.edu} and {\tt\small ames@cds.caltech.edu}}%
}
\begin{document}

\maketitle
\thispagestyle{empty}
\pagestyle{empty}


\begin{abstract}
This paper presents a hierarchical control strategy based on hybrid systems theory, nonlinear control, and safety-critical systems to enable cooperative locomotion of robotic guide dogs and visually impaired people. We address high-dimensional and complex hybrid dynamical models that represent collaborative locomotion. At the high level of the control scheme, local and nonlinear baseline controllers, based on the virtual constraints approach, are designed to induce exponentially stable dynamic gaits. The baseline controller for the leash is assumed to be a nonlinear controller that keeps the human in a safe distance from the dog while following it. At the lower level, a real-time quadratic programming (QP) is solved for modifying the baseline controllers of the robot as well as the leash to avoid obstacles. In particular, the QP framework is set up based on control barrier functions (CBFs) to compute optimal control inputs that guarantee safety while being close to the baseline controllers. The stability of the complex periodic gaits is investigated through the Poincar\'e return map. To demonstrate the power of the analytical foundation, the control algorithms are transferred into an extensive numerical simulation of a complex model that represents cooperative locomotion of a quadrupedal robot, referred to as Vision 60, and a human model. The complex model has $16$ continuous-time domains with $60$ state variables and $20$ control inputs.
\end{abstract}


\vspace{-1.2em}
\section{INTRODUCTION}
\label{INTRODUCTION}
\vspace{-0.5em}

This paper aims to develop an analytical foundation, based on hybrid systems theory, nonlinear control, quadratic programming, and safety-critical systems, to develop a hierarchical control algorithm that enables safe and stable cooperative locomotion of robotic guide dogs and visually impaired people (see Fig. \ref{v60_human}). One of the most challenging problems in deploying autonomous guide robots is to enable  \textit{ubiquitous mobility}. More than half the Earth's landmass is inaccessible to wheeled vehicles which motivates the deployment of intelligent and highly agile \textit{legged guide robots} to access these environments. In particular, infrastructures for human-centered communities, including factories, offices, and homes, are developed for humans which are bipedal walkers capable of stepping over gaps and walking up/down stairs.


\begin{figure}[t!]
\centering
\includegraphics[width=\linewidth]{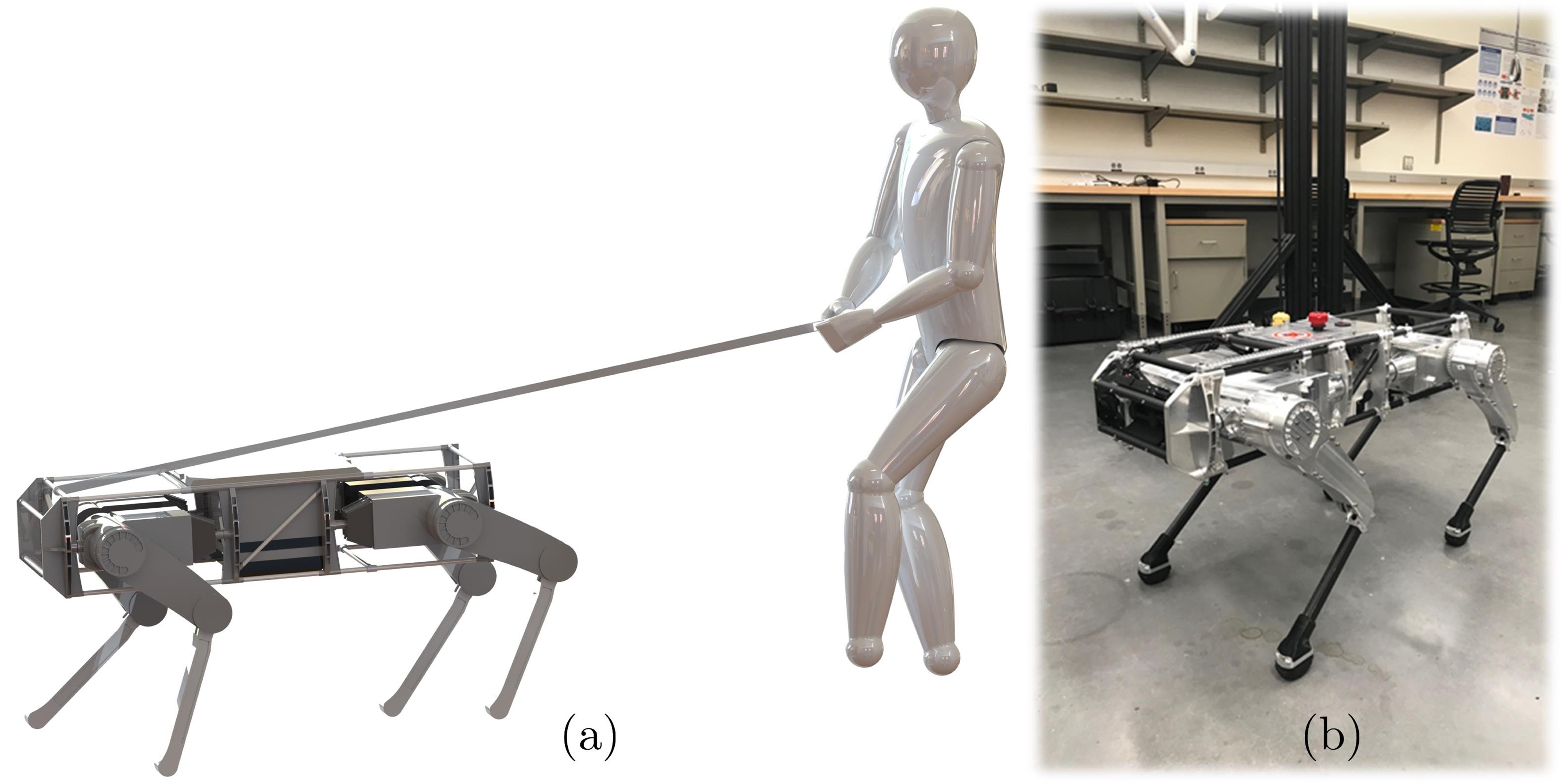}
\vspace{-1.8em}
\caption{(a) Illustration of a visually impaired human being guided by a quadrupedal assistance robot. (b) Vision 60 robot manufactured by Ghost Robotics \cite{Ghost_Robotics} whose full-order hybrid model will be used for the numerical simulations.}
\label{v60_human}
\vspace{-1.8em}
\end{figure}


\vspace{-1em}
\subsection{Related Work}
\vspace{-0.3em}

Although important theoretical and technological advances have occurred for the construction and control of guide robots, state-of-the-art approaches are mainly tailored to the deployment of wheeled vehicles and \textit{not} legged guide robots (e.g., \cite{blind_01,blind_02,blind_03}). Unlike wheeled guide robots, legged robots are \textit{inherently unstable} complex dynamical systems with hybrid nature and high degrees of freedom (DOF). This complicates the design of feedback control algorithms that ensure stable and safe cooperative locomotion of guide dogs and human. Hybrid systems theory has become a powerful approach for modeling and control of legged robots both in theory and practice \cite{Grizzle_Asymptotically_Stable_Walking_IEEE_TAC,Westervelt_Grizzle_Koditschek_HZD_IEEE_TRO,Chevallereau_Grizzle_3D_Biped_IEEE_TRO,Ames_RES_CLF_IEEE_TAC,Ames_DURUS_TRO,Sreenath_Grizzle_HZD_Walking_IJRR,Park_Grizzle_Finite_State_Machine_IEEE_TRO,Poulakakis_Grizzle_SLIP_IEEE_TAC,Tedrake_Robus_Limit_Cycles_CDC,Byl_HZD,Johnson_Burden_Koditschek,Spong_Controlled_Symmetries_IEEE_TAC,Manchester_Tedrake_LQR_IJRR,Vasudevan2017}. Existing nonlinear control approaches that address the hybrid nature of legged locomotion models are developed based on hybrid reduction \cite{Ames_HybridReduction_Original_Paper}, controlled symmetries \cite{Spong_Controlled_Symmetries_IEEE_TAC}, transverse linearization \cite{Manchester_Tedrake_LQR_IJRR}, and hybrid zero dynamics (HZD) \cite{Westervelt_Grizzle_Koditschek_HZD_IEEE_TRO,Ames_RES_CLF_IEEE_TAC}. State-of-the art nonlinear control approaches for dynamic legged locomotion have been tailored to stable locomotion of legged robots, but \textit{not} stable and safe cooperative locomotion of legged guide robots and visually impaired people.


\begin{figure*}[t!]
\centering
\includegraphics[width=\linewidth]{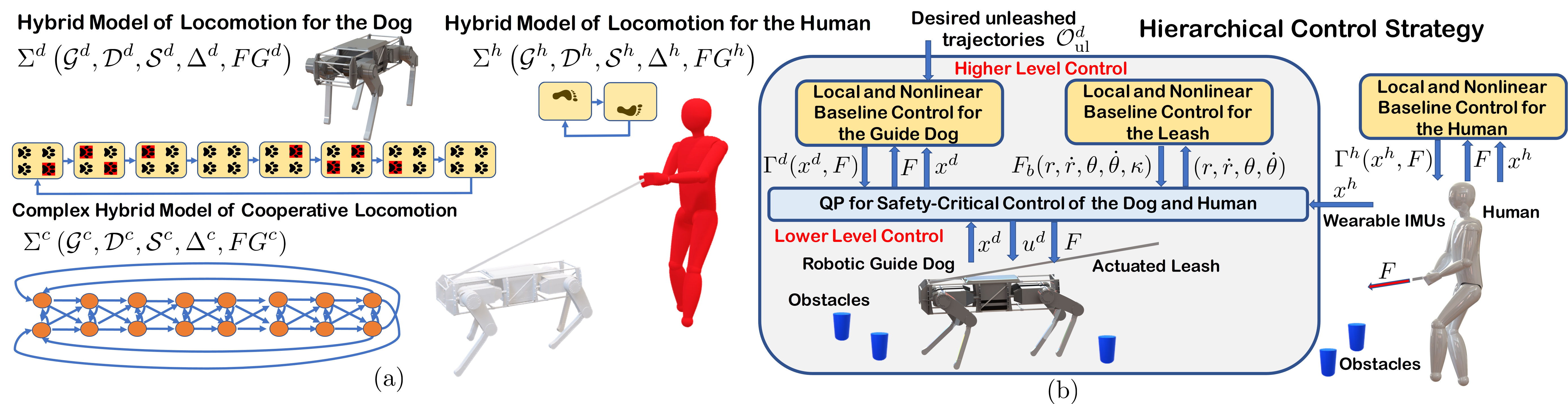}
\vspace{-1.8em}
\caption{(a) Illustration of the hybrid models for the unleashed and leashed locomotion of the guide robot and human. (b) Illustration of the proposed hierarchical control strategy for the safe and stable cooperative locomotion.}
\label{Illustration_control_scheme}
\vspace{-1.7em}
\end{figure*}


\vspace{-1em}
\subsection{Objectives and Contributions}
\vspace{-0.2em}

The \textit{objectives} and \textit{contributions} of this paper are to present a formal foundation towards 1) developing complex hybrid models of cooperative locomotion of legged guide dogs and human, and 2) creating a hierarchical control algorithm, based on nonlinear control, quadratic programming, and control barrier functions (CBFs) \cite{ames2017cbf,gurriet2018online,nguyen20163d}, to ensure stability, safety, and obstacle avoidance. We address complex and high-dimensional models of cooperative legged locomotion via hybrid systems approach. An actuated leash structure is considered for the coordination of the dog and human locomotion while steering the human for safety and obstacle avoidance. At the higher level, the proposed hierarchical control algorithm employs local and nonlinear controllers, referred to as baseline controllers, that induce asymptotically stable unleashed locomotion patterns for the robotic dog and human. The baseline controllers are synthesized via the HZD approach and assumed to have access to the local state measurements as well as the force measurement applied by the leash structure. The leash baseline controller is then designed to keep the human in a safe distance from the robot while following it. The existence and stability of complex and leashed locomotion patterns for the coupled dynamics are addressed through the Poincar\'e return map. At the lower level of the control strategy, the baseline controllers for the dog and leash are modified by a real-time quadratic programming (QP) that includes CBF constraints to ensure safety and obstacle avoidance. The power of the anlytical results are demonstrated  on an extensive numerical simulation of a complex hybrid model that represents cooperative locomotion of a quadrupedal robot, referred to as Vision 60 \cite{Ghost_Robotics} (see Fig. \ref{v60_human}), and a human model. The complex and full-order hybrid dynamical model has $60$ state variables and $20$ control inputs together with $16$ continuous-time domains to describe a trotting gait of the robot and a bipedal gait of the human. The performance of the closed-loop hybrid system in the presence of a discrete set of obstacles around the complex gait is investigated.


\vspace{-0.65em} 
\section{HYBRID MODELS OF LOCOMOTION}
\label{HYBRID MODELS OF LOCOMOTION}
\vspace{-0.3em}

Hybrid models of locomotion can be described by directed cycles. In this section, we will first present the hybrid models for the locomotion of each agent (i.e., robot and human). We will then address the complex hybrid model that describes the cooperative locomotion of agents.


\vspace{-1em}
\subsection{Directed Cycles}
\label{Directed Cycles}
\vspace{-0.3em}

Throughout this paper, we shall consider \textit{multi-domain} hybrid models described by the following tuple \cite{Hamed_Ma_Ames_Vision60}
\begin{equation}\label{open_loop_hybrid_model}
\Sigma\left(\mathcal{G},\mathcal{D},\mathcal{S},\Delta,FG\right),
\end{equation}
where $\mathcal{G}:=(\mathcal{V},\mathcal{E})$ represents a \textit{direct cycle} (i.e., graph) for the studied locomotion pattern (see Fig. \ref{Illustration_control_scheme}a). In our formulation, the vertices $\mathcal{V}$ denote the continuous-time dynamics of legged locomotion, referred to as \textit{domains} or \textit{phases}. The edges $\mathcal{E}\subseteq\mathcal{V}\times\mathcal{V}$ represent the discrete-time transitions among continuous-time dynamics arising from changes in physical constraints (e.g., a new contact point is added to the set of existing contact points with the ground or an existing contact point leaves the ground). For every $v_{i},v_{j}\in\mathcal{V}$, $e=(v_{i}\rightarrow{}v_{j})\in\mathcal{E}$ if $v_{i}$ and $v_{j}$ are adjacent on $\mathcal{G}$. The state variables and control inputs of the hybrid system are shown by $x\in\mathcal{X}$ and $u\in\mathcal{U}$, respectively. The set of state manifolds and set of admissible controls are then denoted by $\mathcal{X}:=\{\mathcal{X}_{v}\}_{v\in\mathcal{V}}$ and $\mathcal{U}:=\{\mathcal{U}_{v}\}_{v\in\mathcal{V}}$, in which $\mathcal{X}_{v}$ and $\mathcal{U}_{v}$ are the state space and admissible controls for the domain $v\in\mathcal{V}$. The set of domains of admissibility are further represented by $\mathcal{D}:=\{\mathcal{D}_{v}\}_{v\in\mathcal{V}}$, where $\mathcal{D}_{v}\subseteq\mathcal{X}_{v}\times\mathcal{U}_{v}$ denotes the set of all points $(x,u)$ on which the unilateral constraints and friction cone conditions are satisfied (i.e., legs are above the walking surface and the foot slippage does not occur). The evolution of the hybrid system during the continuous-time domain $v\in\mathcal{V}$ is described by an ordinary differential equation (ODE) arising from the Euler-Lagrange equations as $\dot{x}=f_{v}(x)+g_{v}(x)\,u$ for all $(x,u)\in\mathcal{D}_{v}$. In addition, $FG:=\{(f_{v},g_{v})\}_{v\in\mathcal{V}}$ represents the set of control systems on $\mathcal{D}$. In order to simplify the presentation, we define the \textit{next domain function} as $\mu:\mathcal{V}\rightarrow\mathcal{V}$ by $\mu(v_{i})=v_{j}$ if $e=(v_{i}\rightarrow{}v_{j})\in\mathcal{E}$. The evolution of the hybrid system during the discrete-time transition $e\in\mathcal{E}$ is further described by the instantaneous mapping $x^{+}=\Delta_{e}(x^{-})$, where $x^{-}(t):=\lim_{\tau\nearrow t}x(\tau)$ and $x^{+}(t):=\lim_{\tau\searrow t}x(\tau)$ represent the state of the system right before and after the discrete transition, respectively. $\Delta:=\{\Delta_{e}\}_{e\in\mathcal{E}}$ denotes the set of discrete-time dynamics. The guards of the hybrid system are finally given by $\mathcal{S}:=\{\mathcal{S}_{e}\}_{e\in\mathcal{E}}$, on which the state trajectories undergo an abrupt change according to the discrete-time dynamics $\Delta_{e}$ when the state and control trajectories $(x,u)$ hit the surface $\mathcal{S}_{e}$ in $\mathcal{D}$.



\vspace{-1em}
\subsection{Continuous-Time Dynamics}
\label{Continuous-Time Dynamics}
\vspace{-0.3em}

In this section, we consider the continuous-time dynamics for each agent. We assume that $q\in\mathcal{Q}\subset\Real^{n}$ denotes the configuration variables for the robot and/or human. The configuration space is further represented by $\mathcal{Q}$. The state vector is taken as $x:=\textrm{col}(q,\dot{q})\in\textrm{T}\mathcal{Q}$, where $\textrm{T}\mathcal{Q}$ denotes the tangent bundle of $\mathcal{Q}$. We remark that the Vision 60 robot has $n=18$ DOFs. For the human model, we make use of an $n=12$ DOF tree structure with a torso and two identical legs consisting of a femur and tibia links. The control inputs $u\in\mathcal{U}\subset\Real^{m}$ are finally taken as torques at the joint levels (i.e., $m=12$ for the dog robot and $m=6$ for the human model) (see Section \ref{NUMERICAL SIMULATIONS AND RESULTS} for further details on the models).

It is supposed that $\eta_{v}(q)\equiv0$ represents the holonomic constraints during the domain $v\in\mathcal{V}$ arising form the contact conditions between the leg ends and the ground. The equations of motion during the continuous-time domain $v$ are then described by the Euler-Lagrange equations and principle of virtual work as follows
\begin{alignat}{6}
&D(q)\,\ddot{q}+C\left(q,\dot{q}\right)\dot{q}+G(q)&&=B\,u+J_{v}^\top(q)\,\lambda\nonumber\\
&J_{v}(q)\,\ddot{q}+\frac{\partial}{\partial q}\left(J_{v}(q)\,\dot{q}\right)\dot{q}&&=0,\label{EL_equations}
\end{alignat}
where $D(q)\in\Real^{n\times{}n}$ denotes the positive definite mass-inertia matrix, $C(q,\dot{q})\,\dot{q}+G(q)\in\Real^{n}$ represents the Coriolis, centrifugal, and gravitational terms, $B\in\Real^{n\times{m}}$ denotes the input distribution matrix, $\lambda$ represents the Lagrange multipliers (i.e., ground reaction forces), and $J_{v}(q):=\frac{\partial \eta_{v}}{\partial q}(q)$ is the contact Jacobian matrix. If $J_{v}$ has full rank, one can eliminate the Lagrange multipliers to express \eqref{EL_equations} as
\begin{equation}\label{EL_equations_02}
D(q)\,\ddot{q}+H_{v}\left(q,\dot{q}\right)=T_{v}(q)\,u,
\end{equation}
in which $H_{v}:=\textrm{proj}_{v}\,H+J_{v}^\top\,(J_{v}\,D^{-1}\,J_{v}^\top)^{-1}\frac{\partial}{\partial q}(J_{v}\,\dot{q})\dot{q}$, $H=C(q,\dot{q})\,\dot{q}+G(q)$, $T_{v}:=\textrm{proj}_{v}\,B$, and $\textrm{proj}_{v}:=I-J_{v}^\top(J_{v}\,D^{-1}\,J_{v}^\top)^{-1}J_{v}\,D^{-1}$. We remark that \eqref{EL_equations_02} can be expressed as an input-affine system, i.e., $\dot{x}=f_{v}(x)+g_{v}(x)\,u$.



\vspace{-1em}
\subsection{Discrete-Time Dynamics}
\label{Discrete-Time Dynamics}
\vspace{-0.3em}

If a new contact point is added to the existing set of contact points with the ground, we employ a rigid impact model \cite{Hurmuzlu_Impact} to describe the abrupt changes in the velocity coordinates according to the impact. In particular, if $\delta\lambda$ represents the intensity of the impulsive ground reaction force on the contacting points, integrating \eqref{EL_equations} over the infinitesimal period of the impact (i.e., $[t^{-},t^{+}]$) yields
\begin{equation}\label{impact}
D(q)\,\dot{q}^{+}-D(q)\,\dot{q}^{-}=J_{\mu(v)}^\top\,\delta\lambda,\,\,\,J_{\mu(v)}(q)\,\dot{q}^{+}=0,
\end{equation}
in which $\dot{q}^{-}$ and $\dot{q}^{+}$ represent the generalized velocity vector right before and after the impact, respectively. From the continuity of position, we assume $q^{+}=q^{-}$ and then from \eqref{impact}, one can solve for $\dot{q}^{+}$ and $\delta\lambda$ in terms of $(q^{-},\dot{q}^{-})$ to have a closed-form expression as $x^{+}=\Delta_{e}(x^{-})$. Furthermore, if the leg leaves the ground, we take $\Delta_{e}$ as the identity map to preserve the continuity of position and velocity coordinates over the corresponding discrete transition.


\vspace{-1em}
\subsection{Complex Hybrid Models for Cooperative Locomotion}
\label{Complex Hybrid Model for Cooperative Locomotion}
\vspace{-0.3em}

Throughout this paper, we shall assume that there is a rigid and massless leash model that connects a point on the dog (e.g., head) to a point on the human (e.g., hand or hip) via ball (i.e., socket) joints. The leash will further be assumed to be actuated to control its length and orientation so that the human can follow the dog in a safe manner. This will be clarified with more details in Section \ref{Leash Baseline Controller}. The state and control inputs for the robotic dog and human are shown by $x^{i}:=\textrm{col}(q^{i},\dot{q}^{i})$ and $u^{i}$, respectively, for $i\in\{d,h\}$, where the superscripts ``$d$'' and ``$h$'' stand for the dog and human.

\noindent\textbf{Complex Graph:} The complex hybrid model that describes the cooperative locomotion of the robot and human will have a complex graph that is taken as the strong product of graphs $\mathcal{G}^{d}=(\mathcal{V}^{d},\mathcal{E}^{d})$ and $\mathcal{G}^{h}=(\mathcal{V}^{h},\mathcal{E}^{h})$. The strong product is denoted by $\mathcal{G}^{c}:=\mathcal{G}^{d}\boxtimes\mathcal{G}^{h}$
that has the vertex set $\mathcal{V}^{c}:=\mathcal{V}^{d}\times\mathcal{V}^{h}$, and any two vertices $(v,w)$ and $(v',w')$ in $\mathcal{V}^{c}$ are adjacent if and only if 1) $v=v'$ and $(w\rightarrow{}w')$ is an edge in $\mathcal{E}^{h}$, or 2) $(v\rightarrow{}v')$ is an edge in $\mathcal{E}^{d}$ and $w=w'$, or 3) $(v\rightarrow{}v')$ is an edge in $\mathcal{E}^{d}$ and $(w\rightarrow{}w')$ is an edge in $\mathcal{E}^{h}$. In our notation, the superscript ``$c$'' represents the complex model. The augmented state and control inputs are further denoted by $x^{c}:=\textrm{col}(x^{d},x^{h})$ and $u^{c}:=\textrm{col}(u^{d},u^{h})$, respectively.

\noindent\textbf{Complex Continuous-Time Dynamics:} For every vertex $(v,w)\in\mathcal{V}^{c}$, the evolution of the composite mechanical system, consisting of the robot and human, can be described by the following nonlinear and \textit{coupled} dynamics
\begin{alignat}{8}
&&D^{d}\left(q^{d}\right)\ddot{q}^{d}&+H^{d}_{v}\left(q^{d},\dot{q}^{d}\right)&=T_{v}^{d}\left(q^{d}\right)u^{d}&-J_{\textrm{head}}^{d\top}\left(q^{d}\right)F\nonumber\\
&&D^{h}\left(q^{h}\right)\ddot{q}^{h}&+H^{h}_{w}\left(q^{h},\dot{q}^{h}\right)&=T_{w}^{h}\left(q^{h}\right)u^{h}&+J_{\textrm{hand}}^{h\top}\left(q^{h}\right)F,\label{coupled_dyn}
\end{alignat}
in which $J_{\textrm{head}}^{d}(q^{d})$ and $J_{\textrm{hand}}^{h}(q^{h})$ denote the Jacobian matrices for the end points of the leash at the dog and human sides, respectively, and $F\in\Real^{3}$ represents the force applied by the leash to the human hand. We remark that the superscripts ``$d$'' and ``$h$'' represent the dynamic and kinematic terms for the dog and human models, respectively.

\noindent\textbf{Complex Discrete-Time Dynamics:} Since the leash model is assumed to be massless and cannot employ impulsive forces, the evolution of the composite mechanical system over the discrete transition $(v,w)\rightarrow(v',w')$ can be described by the following nonlinear and \textit{decoupled} mappings
\begin{equation}\label{complex_impact}
x^{d+}=\Delta_{v\rightarrow{}v'}^{d}\left(x^{d-}\right),\quad
x^{h+}=\Delta_{w\rightarrow{}w'}^{h}\left(x^{h-}\right).
\end{equation}
We remark that if $v=v'$ (resp. $w=w'$) in \eqref{complex_impact}, the mapping $\Delta_{v\rightarrow{}v'}$ (resp. $\Delta_{w\rightarrow{}w'}$) is simply taken as the identity.

\begin{remark}
In this paper, we shall consider a trotting gait for Vision 60 robot with $8$ continuous-time domains (see Fig. \ref{Illustration_control_scheme}a for more details). The graph for the bipedal gait of the human model also has $2$ continuous-time domains that represent the right and left stance phases. Consequently, the complex hybrid model of locomotion would have $8\times2=16$ continuous-time domains for which there are $2\times(n^{d}+n^{h})=2\times(18+12)=60$ state variables and $m^{d}+m^{h}+m^{l}=12+6+2=20$ control inputs. Here, $m^{l}=2$ represents the actuator numbers for the leash (see Section \ref{Leash Baseline Controller}).
\end{remark}


\vspace{-1em}
\section{HIERARCHICAL CONTROL STRATEGY}
\label{HIERARCHICAL CONTROL STRATEGY}
\vspace{-0.5em}

In order to have stable and safe cooperative locomotion for the robot and human, we will present a two-level control strategy for the robotic dog and leash (see Fig. \ref{Illustration_control_scheme}b). Since the mathematical models for the local controller of the human part are not known, we shall assume a nonlinear local controller for the human, but will \textit{not} change that controller to address unforeseen events and obstacle avoidance. We will instead focus on the dog and leash hierarchical control strategy to ensure stability and safety. At the higher level of the control strategy, we will employ a local nonlinear controller for the robot that has access to its own state variables as well as the force employed by the leash (i.e., force measurement). This controller will be referred to as the \textit{robot baseline controller}. The objective is to asymptotically derive some outputs to zero that encode the locomotion patterns for the guide robot (e.g., trot, amble, walk, or gallop gaits at desired speeds). The baseline controller for the dog will be designed via HZD and virtual constraints approach in Section \ref{LOCAL VIRTUAL CONSTRAINT CONTROLLERS}. This controller exponentially stabilizes gaits for the hybrid model of the dog in the presence of the leash force. The baseline controller for the leash will be designed to ensure that 1) there is always a safe distance between the robot and human, and 2) human follows the robot (see Section \ref{Leash Baseline Controller}). At the lower level, we will solve a real-time QP optimization to ensure safety and obstacle avoidance. In particular, the QP optimization modifies the baseline controllers for the robot as well as the leash to keep the dog and human in a safe distance from obstacles. The QP framework will be set up based on CBFs in Section \ref{SAFETY CRITICAL CONTROL AND CONVEX QP OPTIMIZATION}.


\vspace{-1em}
\subsection{Local Baseline Controllers for the Agents}
\vspace{-0.3em}

In this section, we consider the robot and human as two multi-body ``agents'' specified by the superscript $i\in\{d,h\}$.

\begin{definition}[Local Baseline Controllers]
We suppose that there are local and smooth feedback laws $\Gamma^{i}(x^{i},F):=\{\Gamma_{v}^{i}(x^{i},F)\}_{v\in\mathcal{V}^{i}}$ for the agent $i\in\{d,h\}$ to have stable locomotion patterns. In our notation, $\Gamma_{v}^{i}(x^{i},F)$ is a local and nonlinear feedback controller, referred to as \textit{baseline controller}, that is employed during the continuous-time $v\in\mathcal{V}^{i}$ and assumed to have access to the state variables of the agent $i$ (i.e., $x^{i}$) as well as the force $F$.
\end{definition}

\begin{assumption}[Transvsersal Stable Periodic Orbits]
\label{Transvsersal Stable Periodic Orbits}
By employing the local baseline controllers for the agent $i\in\{d,h\}$ in the unleashed case (i.e., $F\equiv0$), we assume that there is a period-one orbit (i.e., gait) for the closed-loop hybrid model $\Sigma^{i}$, denoted by $\mathcal{O}^{i}_{\textrm{ul}}$, that is transversal to the guards $\mathcal{S}^{i}$. In our notation, the subscript ``$\textrm{ul}$'' stands for the unleashed gait. The orbit $\mathcal{O}^{i}_{\textrm{ul}}$ is further supposed to be locally exponentially stable.
\end{assumption}

For future purposes, the evolution of the state variables $x^{i}$ on the unleashed orbit $\mathcal{O}^{i}_{\textrm{ul}}$ is represented by $x_{\star}^{i}(t)$ for $t\geq0$. The orbit $\mathcal{O}^{i}_{\textrm{ul}}$ can then be expressed as
\begin{equation}
\mathcal{O}^{i}_{\textrm{ul}}:=\left\{x^{i}=x_{\star}^{i}(t)\,|\,0\leq{t}<T^{i}\right\},
\end{equation}
in which $T^{i}>0$ denotes the minimal period of $x_{\star}^{i}(t)$. Section \ref{LOCAL VIRTUAL CONSTRAINT CONTROLLERS} will present a class of HZD-based local baseline controllers to exponentially stabilize the periodic gaits $\mathcal{O}^{i}_{\textrm{ul}}$.

\begin{assumption}[Common Multiples of Gait Periods]
\label{Common Multiples of Gait Periods}
We assume that there are common multiples for the periods of the dog and human unleashed gaits. More specifically, there are positive integers $N^{d}$ and $N^{h}$ such that $N^{d}\,T^{d}=N^{h}\,T^{h}$. For future purposes, we denote the minimum of these values by $N^{d}_{\min}$ and $N^{h}_{\min}$.
\end{assumption}


\vspace{-0.5em}
\subsection{Leash Baseline Controller}
\label{Leash Baseline Controller}
\vspace{-0.3em}

As mentioned previously, the leash structure is assumed to be rigid. We further suppose that its length and orientation can be independently controlled by two linear and rotational actuators. To make this notion more precise, let us denote the Cartesian coordinates of the dog head and the human hand by $p^{d}_{\textrm{head}}(q^{d})\in\Real^{3}$ and $p^{h}_{\textrm{hand}}(q^{h})\in\Real^{3}$, respectively. Next, consider the vector connecting $p^{h}_{\textrm{hand}}(q^{h})$ to $p^{d}_{\textrm{head}}(q^{d})$. The representation of this vector in the cylindrical coordinates can be given by $(r,\theta,z)$. We assume that there are sensors for the leash structure to measure $(r,\theta)$. The objective here is to design a local force feedback controller for the leash that has access to $(r,\theta)$ to keep the human in a safe distance from the robot dog while regulating the angle $\theta$. In particular, we are interested in (i) having $r\in[r_{\min},r_{\max}]$ for some $0<r_{\min}<r_{\max}$ and (ii) imposing $\theta\rightarrow0$. This controller is referred to as the \textit{leash baseline controller}. One possible way to design such a controller is to decompose the force $F$ into $(F_{r},F_{\theta},F_{z})$, in which $F_{r}(r,\dot{r})$ is the longitudinal force designed to be sufficiently differentiable while being zero over the safe zone $[r_{\min},r_{\max}].$ Moreover, $F_{\theta}(\theta,\dot{\theta})$ is a torsional force that can be taken as a simple PD controller to regulate $\theta$. For the purpose of this paper, $F_{z}$ is assumed to be zero. For future purposes, the leash baseline controller will be represented by $F_{b}(r,\dot{r},\theta,\dot{\theta},\kappa)\in\Real^{3}$, where the subscript ``$b$'' stands for the baseline control and $\kappa$ represents some adjustable controller parameters, e.g., PD gains.

\begin{assumption}
\label{leash_assumption}
We assume that $F_{b}$ is sufficiently differentiable with respect to its arguments $(r,\dot{r},\theta,\dot{\theta},\kappa)$. Furthermore, for $\kappa=0$, $F_{b}(r,\dot{r},\theta,\dot{\theta},\kappa)\equiv0$.
\end{assumption}

\begin{remark}
Since the longitudinal force $F_{r}(r,\dot{r})$ is assumed to be zero over the safe zone $[r_{\min},r_{\max}]$, $F_{r}$ would have a deadzone structure over $[r_{\min},r_{\max}]$. Assumption \ref{leash_assumption} ensures that $F_{r}$ is designed to be differentiable at the corners $r_{\min}$ and $r_{\max}$ such that the stability analysis can be carried out via the Poincar\'e return map in Theorem \ref{Stability of Complex Gaits with Leash}.
\end{remark}


\vspace{-1em}
\subsection{Stability Analysis of Complex Gaits}
\vspace{-0.3em}

This section addresses the existence and stability of periodic orbits for the cooperative locomotion of the robot and human in the presence of leash. We make use of the Poincar\'e sections analysis and present the following theorem.

\begin{theorem}[Stability of Complex Gaits with Leash]
\label{Stability of Complex Gaits with Leash}
Under Assumptions \ref{Transvsersal Stable Periodic Orbits}-\ref{leash_assumption}, there is an open neighborhood of $0$, denoted by $\mathcal{N}(0)$, such that for all gain values $\kappa\in\mathcal{N}(0)$, there is an exponentially stable complex gait for the leashed closed-loop hybrid system $\Sigma^{c}$.
\end{theorem}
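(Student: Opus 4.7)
The plan is to carry out a Poincar\'e-section analysis on the complex hybrid system $\Sigma^c$, treating the leash gain vector $\kappa$ as a bifurcation parameter and exploiting the decoupling $F_b\equiv 0$ at $\kappa=0$ afforded by Assumption~\ref{leash_assumption}. First I would select a Poincar\'e section $\mathcal{S}^c$ for $\Sigma^c$ so that the candidate complex orbit, obtained by concatenating the dog's gait traversed $N^d_{\min}$ times with the human's gait traversed $N^h_{\min}$ times over the common period $T^c := N^d_{\min}T^d = N^h_{\min}T^h$ from Assumption~\ref{Common Multiples of Gait Periods}, crosses $\mathcal{S}^c$ transversally exactly once per complex period. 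A natural candidate is the $N^d_{\min}$-th return to the dog's guard $\mathcal{S}^d_{e^\star}$; transversality at $\kappa=0$ inherits from Assumption~\ref{Transvsersal Stable Periodic Orbits}.

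The next step is to analyze the unperturbed Poincar\'e map $P^c(\cdot,0)$ on $\mathcal{S}^c$. Since the coupled dynamics \eqref{coupled_dyn} fully decouple at $\kappa=0$, $P^c(\cdot,0)$ factors as a direct product whose dog component is $(P^d)^{N^d_{\min}}$ and whose human component is $(P^h)^{N^h_{\min}}$ (up to an appropriate alignment map between sections). By Assumption~\ref{Transvsersal Stable Periodic Orbits}, each $\mathrm{D}P^i(x^{i\star})$ has spectral radius strictly less than one, so the Jacobian $\mathrm{D}P^c((x^{d\star},x^{h\star}),0)$ is block-diagonal with spectral radius strictly less than one on the transversal subspace. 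To extend this to $\kappa\neq 0$, I would invoke $C^1$ dependence of the flow, impact maps \eqref{complex_impact}, and guard-crossing times on $\kappa$; Assumption~\ref{leash_assumption} guarantees $F_b\in C^1$ in $\kappa$, hence $P^c$ is $C^1$ on a neighborhood of $((x^{d\star},x^{h\star}),0)$. Applying the implicit function theorem to $P^c(x,\kappa)-x=0$ (the reduced linearization $\mathrm{D}P^c(\cdot,0)-I$ is invertible because $1$ lies outside its spectrum) yields a $C^1$ branch $x^\star(\kappa)$ of fixed points, and continuity of eigenvalues under $C^1$ perturbations preserves spectral radius strictly less than one on an open neighborhood $\mathcal{N}(0)$ of $0$. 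The standard Poincar\'e stability criterion for hybrid systems then yields exponential stability of the induced complex orbit $\mathcal{O}^c(\kappa)$.

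The principal technical obstacle is two-fold. First, in the \emph{unreduced} product system at $\kappa=0$ the dog and human are fully uncoupled, which produces a one-parameter family of closed orbits indexed by their relative phase offset and a second trivial unit Floquet multiplier in the joint monodromy. A careful choice of Poincar\'e section -- or an equivalent center-manifold/averaging reduction -- is needed to quotient out this time-translation symmetry before applying the implicit function theorem, and the restoring action of the torsional force $F_\theta$ and the deadzone longitudinal force $F_r$ in the leash baseline controller must be shown to pin down a unique stable relative phase for small $\kappa$. Second, verifying smoothness of $P^c$ through the hybrid transitions on the complex graph $\mathcal{G}^d\boxtimes\mathcal{G}^h$ requires that the nominal ordering and transversality of guard crossings persist for $\kappa\neq 0$; this is precisely where the differentiability of $F_b$ at the deadzone corners $r_{\min},r_{\max}$ (noted in the remark following Assumption~\ref{leash_assumption}) is essential, and it is what forces the restriction to the neighborhood $\mathcal{N}(0)$ in the statement.
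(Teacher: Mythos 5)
Your proposal follows the same route as the paper's own proof: form the product orbit $\mathcal{O}^{c}_{\textrm{ul}}$ over the common period $N^{d}_{\min}T^{d}=N^{h}_{\min}T^{h}$, pick a transversal Poincar\'e section, apply the implicit function theorem to $P^{c}(x^{c},\kappa)-x^{c}=0$ using the smoothness of $F_{b}$ in $\kappa$ from Assumption \ref{leash_assumption}, and finish by continuity of eigenvalues. The substantive difference is that you flag a degeneracy the paper's proof passes over, and your diagnosis is correct: at $\kappa=0$ the dog and human dynamics are completely decoupled, so $(x_{\star}^{d}(t),x_{\star}^{h}(t+\phi))$ is a period-$N^{d}_{\min}T^{d}$ orbit for every relative phase $\phi$, and the joint monodromy carries \emph{two} unit Floquet multipliers, with eigenvectors $(\dot{x}_{\star}^{d},0)$ and $(0,\dot{x}_{\star}^{h})$. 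A codimension-one section removes only one of them, so $\frac{\partial P^{c}}{\partial x^{c}}(x_{\star,\textrm{ul}}^{c},0)$ retains an eigenvalue equal to $1$ along the phase-shift direction and $\frac{\partial P^{c}}{\partial x^{c}}-I$ is singular; no choice of section avoids this, since the spectrum of the linearized return map is the (saltation-composed) monodromy spectrum with a single copy of $1$ deleted. The paper's proof instead asserts that $\mathcal{O}^{c}_{\textrm{ul}}$ is exponentially stable for the unleashed complex system, which is precisely what the phase family precludes: nearby initial data with perturbed relative phase converge to a \emph{different} orbit of the family. On this point your write-up is more careful than the paper's.

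That said, your proposal does not close the gap either: you leave as an obligation that the leash forces ``pin down a unique stable relative phase for small $\kappa$,'' and that is exactly the step on which the theorem hinges. The implicit function theorem cannot be invoked as stated; one needs a Lyapunov--Schmidt reduction onto the one-dimensional kernel (the phase direction), where the bifurcation equation is $O(\kappa)$, so existence of a nearby fixed point and the direction in which the critical multiplier moves are decided by the sign of a first-order phase-coupling coefficient determined by $F_{\theta}$ and $F_{r}$ --- not by smallness of $\kappa$ alone. The deadzone is a genuine worry here: if $r(t)$ stays strictly inside $[r_{\min},r_{\max}]$ along the nominal complex orbit and the $\theta$-coupling to the agents' phases is degenerate, the leading-order phase coupling vanishes and the phase remains neutrally stable for all small $\kappa$. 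So the missing idea, in both your sketch and the paper's proof, is a verifiable phase-locking condition on the leash baseline controller; without it the conclusion of exponential stability of the leashed complex gait does not follow.
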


\begin{proof}
From Assumptions \ref{Transvsersal Stable Periodic Orbits} and \ref{Common Multiples of Gait Periods}, the following augmented orbit
\begin{equation}
\mathcal{O}^{c}_{\textrm{ul}}:=\left\{x^{c}=\textrm{col}(x_{\star}^{d}(t),x_{\star}^{h}(t))\,|\,0\leq{t}<N^{d}_{\min}\,T^{d}\right\}
\end{equation}
is indeed a periodic orbit for the complex and unleashed hybrid system $\Sigma^{c}$. We next choose a Poincar\'e section transversal to this orbit, denoted by $\mathscr{S}$, and consider a Poincar\'e return map for $\Sigma^{c}$ from $\mathscr{S}$ back to $\mathscr{S}$ as $P^{c}(x^{c},\kappa)$. According to the construction procedure, there is a fixed point for the Poincar\'e map that corresponds to $\mathcal{O}^{c}_{\textrm{ul}}$, that is $P^{c}(x_{\star,\textrm{ul}}^{c},0)=x_{\star,\textrm{ul}}^{c}$, in which  $x_{\star,\textrm{ul}}^{c}$ represents the fixed point. We next consider the algebraic equation $E(x^{c},\kappa):=P^{c}\left(x^{c},\kappa\right)-x^{c}=0$. Since $\mathcal{O}_{\textrm{ul}}^{c}$ is exponentially stable for the unleashed complex system, the Jacobian matrix $\frac{\partial E}{\partial x^{c}}(x_{\star,\textrm{ul}}^{c},0)=\frac{\partial P^{c}}{\partial x^{c}}(x_{\star,\textrm{ul}}^{c},0)-I$ is nonsingular. Hence, from the Implicit Function Theorem, there exists $\mathcal{N}(0)$ such that for all $\kappa\in\mathcal{N}(0)$, there is a fixed point for $P^{c}(x^{c},\kappa)$. Moreover, since the elements and eigenvalues of the Jacobian matrix $\frac{\partial P^{c}}{\partial x^{c}}(x^{c},\kappa)$ continuously depend on $\kappa$, one can choose $\mathcal{N}(0)$ sufficiently small such that the eigenvalues of the Jacobian matrix remain inside the unit circle. This completes the proof of exponential stability for leashed locomotion.
\end{proof}


\vspace{-1em}
\section{LOCAL VIRTUAL CONSTRAINT CONTROLLERS WITH FORCE FEEDBACK}
\label{LOCAL VIRTUAL CONSTRAINT CONTROLLERS}
\vspace{-0.3em}

The objective of this section is to design the local baseline controller for the robotic dog. The controller is designed based on virtual constraints approach \cite{Grizzle_Asymptotically_Stable_Walking_IEEE_TAC,Westervelt_Grizzle_Koditschek_HZD_IEEE_TRO} to ensure exponential stability of the gait for the unleashed case. Virtual constraints are defined as kinematic constraints (i.e., outputs) that encode the locomotion pattern. They are imposed through the action of the baseline controllers. The idea is to coordinate the motion of the links within domains. We make use of relative degree one and relative degree two virtual constraints (i.e., outputs). In particular, during the continuous-time domain $v\in\mathcal{V}^{d}$, we consider the following outputs to be regulated
\begin{equation}\label{virtual_constraint_outputs}
y_{v}^{d}\left(x^{d}\right):=\begin{bmatrix}
y_{1v}^{d}(q^{d},\dot{q}^{d})\\
y_{2v}^{d}(q^{d})
\end{bmatrix},
\end{equation}
in which $y_{1v}^{d}(q^{d},\dot{q}^{d})$ represents relative degree one nonholonomic outputs for velocity regulation and  and $y_{2v}^{d}(q^{d})$ denotes relative degree two holonomic outputs for position tracking. Using the nonlinear dynamics \eqref{coupled_dyn} and standard input-output linearization \cite{Isidori_Book}, one can obtain
\begin{equation}\label{output_dyn_0}
\begin{bmatrix}
\dot{y}_{1v}^{d}\\
\ddot{y}_{2v}^{d}
\end{bmatrix}=A_{v}^{d}\left(x^{d}\right)u^{d}+b_{v}^{d}\left(x^{d},F\right),
\end{equation}
where $A_{v}^{d}(x)$ is a decoupling matrix and $b_{v}^{d}$ consists of Lie derivatives (see \cite{Hamed_Ma_Ames_Vision60} for more details). Furthermore, we would like to solve for $u^{d}$ that results in the following output dynamics
\begin{equation}\label{output_dyn}
\begin{bmatrix}
\dot{y}_{1v}^{d}\\
\ddot{y}_{2v}^{d}
\end{bmatrix}=-\ell_{v}(x^{d}):=-\begin{bmatrix}
K_{P}\,y_{1d}^{v}\\
K_{D}\,\dot{y}_{2v}^{d}+K_{P}\,y_{2v}^{d}
\end{bmatrix}
\end{equation}
with $K_{P}$ and $K_{D}$ being positive PD gains. The local baseline controller for the dog is finally chosen as
\begin{equation}\label{HZD_controllers}
\Gamma_{v}^{d}\left(x^{d},F\right):=-A_{v}^{d\top}\left(A_{v}^{d}\,A_{v}^{d\top}\right)^{-1}\left(b_{v}^{d}+\ell_{v}\right)
\end{equation}
that 1) requires local state and force measurement and 2) exponentially stabilizes the equilibrium point $(y_{1v}^{d},y_{2v}^{d},\dot{y}_{2v}^{d})=(0,0,0)$ for the output dynamics \eqref{output_dyn} in the presence of the external force $F$, i.e., $\lim_{t\rightarrow\infty}y_{v}^{d}(t)=0$.

\begin{remark}[Proper Selection of Virtual Constraints]
The periodic orbit $\mathcal{O}^{d}_{\textrm{ul}}$ can be designed in an offline manner through direct collocation based trajectory optimization techniques \cite{FROST,Ames_DURUS_TRO}. For a given periodic gait $\mathcal{O}_{\textrm{ul}}^{d}$, the output functions $y_{v}^{d}$ in \eqref{virtual_constraint_outputs} are chosen to vanish on the desired gait $\mathcal{O}_{\textrm{ul}}^{d}$. We have observed that the stability of gaits in the virtual constraint approach depends on the proper selection of the output functions $y_{v}^{d}$ to be regulated \cite{Hamed_Buss_Grizzle_BMI_IJRR}. Our previous work \cite{Hamed_Buss_Grizzle_BMI_IJRR,Hamed_Gregg_decentralized_control_IEEE_CST} has developed a recursive algorithm, based on semidefinite programming, to systematically design output functions for which the gaits are exponentially stable for the corresponding hybrid dynamics. The algorithm is offline and assumes a finite-dimensional parameterization of the output functions to be determined. Then it translates the exponential stabilization problem into a recursive optimization problem that is set up based on linear and bilinear matrix inequalities. Sufficient conditions for the convergence of the algorithm to a set of stabilizing parameters have been addressed in \cite{Hamed_Gregg_decentralized_control_IEEE_CST,Hamed_Gregg_Ames_ACC}.
\end{remark}

\begin{remark}
Nonlinear local controllers for the human model are not know. However, for the purpose of this paper, we assume virtual constraint-based controllers, analogous to \eqref{HZD_controllers}, for the human model. Furthermore, evidence suggests that the phase-dependent models can reasonably predict human joint behavior across perturbations \cite{Villarreal:TRO}.
\end{remark}


\vspace{-0.8em}
\section{SAFETY-CRITICAL CONTROL AND QP OPTIMIZATION}
\label{SAFETY CRITICAL CONTROL AND CONVEX QP OPTIMIZATION}
\vspace{-0.5em}

This section aims to develop low-level safety-critical control algorithms that ensure obstacle avoidance while implementing the baseline controllers for the agents and leash in the  hierarchical control structure. We will address safety critical conditions through set invariance and CBFs. In particular, a system being safe is commonly defined as the system never leaving the safety set \cite{ames2017cbf,gurriet2018online,nguyen20163d}. For low-dimensional dynamical systems, analytical control strategies can be derived. However, finding such a control policy for high-DOF and complex models of cooperative legged locomotion of guide dogs and humans is a challenge. To tackle this problem, we make use of a real-time QP formulation to address safety specifications represented by CBFs \cite{ames2017cbf}. To present the main idea, let us consider a discrete set of static and point obstacles $\mathscr{P}^{o}_{\alpha}$ for $\alpha\in\mathcal{I}^{o}$ whose Cartesian coordinates in the $xy$-planes are given by $r_{\alpha}^{o}:=\textrm{col}(x_{\alpha}^{o},y_{\alpha}^{o})$. Next we assume a set of critical points on the robot and human that are supposed to be in a safe distance from these obstacles. These points are denoted by $\mathscr{P}^{d}_{\beta}$ and $\mathscr{P}^{h}_{\gamma}$ for the dog and human, respectively, for some $\beta\in\mathcal{I}^{d}$ and $\gamma\in\mathcal{I}^{h}$. One typical example includes the hip points of the robot and human models. The Cartesian coordinates of $\mathscr{P}^{d}_{\beta}$ and $\mathscr{P}^{h}_{\gamma}$ in the $xy$-plane are further denoted by $r_{\beta}^{d}(q^{d})\in\Real^{2}$ and $r_{\gamma}^{h}(q^{h})\in\Real^{2}$. We formulate the safety set as
\begin{alignat}{6}
&&\mathcal{C}:=\big\{x^{c}=\textrm{col}\left(x^{d},x^{h}\right)&\,|\,h_{\beta,\alpha}^{d}\left(q^{d}\right)\geq0,\, h_{\gamma,\alpha}^{h}\left(q^{h}\right)\geq0,\nonumber\\
&& &\forall(\alpha,\beta,\gamma)\in\mathcal{I}^{o}\times\mathcal{I}^{d}\times\mathcal{I}^{h}\big\},
\end{alignat}
where $h_{\beta,\alpha}^{d}(q^{d}):=\|r^{d}_{\beta}(q^{d})-r_{\alpha}^{o}\|_{2}^{2}-h_{\min}^{2}$ and $h_{\gamma,\alpha}^{h}(q^{h}):=\|r^{h}_{\gamma}(q^{h})-r_{\alpha}^{o}\|_{2}^{2}-h_{\min}^{2}$ for some safety distance $h_{\min}>0$. In addition, $\|.\|_{2}$ denotes the Euclidean norm. The safety constraints $h_{\beta,\alpha}^{d}(q^{d})\geq0$ and $h_{\gamma,\alpha}^{h}(q^{h})\geq0$ are relative degree two. Our objective is to modify the torques for the dog robot $u^{d}$ as well as the leash force $F$ to render the safety set $\mathcal{C}$ forward invariant under the flow of the closed-loop complex model. We remark that we are \textit{not} allowed to change the human controller $u^{h}=\Gamma^{h}(x^{h},F)$ as the person can be visually impaired and cannot react properly. For this purpose, we make use of the concept of exponential CBFs (ECBFs) \cite{Sreenaath_HighDegreeCBF}. In particular, we define the ECBFs as follows
\begin{alignat}{6}
&&\mathcal{B}_{\beta,\alpha}^{d}\left(x^{d}\right)&:=\dot{h}_{\beta,\alpha}^{d}\left(x^{d}\right)&+\lambda\,h_{\beta,\alpha}^{d}\left(x^{d}\right)\label{CBFS_1}\\
&&\mathcal{B}_{\gamma,\alpha}^{h}\left(x^{h}\right)&:=\dot{h}_{\gamma,\alpha}^{h}\left(x^{h}\right)&+\lambda\,h_{\gamma,\alpha}^{h}\left(x^{h}\right)\label{CBFS_2}
\end{alignat}
for all $(\alpha,\beta,\gamma)\in\mathcal{I}^{o}\times\mathcal{I}^{d}\times\mathcal{I}^{h}=:\mathcal{I}$, where $\lambda>0$ is an adjustable parameter. The exponential CBF condition further implies that
\begin{alignat}{6}
&& \dot{\mathcal{B}}_{\beta,\alpha}^{d}\left(x^{d},u^{d},F\right)&+\omega\,\mathcal{B}_{\beta,\alpha}\left(x^{d}\right)&\geq0\label{ECBFs_01}\\
&& \dot{\mathcal{B}}_{\gamma,\alpha}^{h}\left(x^{h},F\right)&+\omega\,\mathcal{B}_{\gamma,\alpha}\left(x^{h}\right)&\geq0\label{ECBFs_02}
\end{alignat}
for all $(\alpha,\beta,\gamma)\in\mathcal{I}$ and some adjustable scalar $\omega>0$. Substituting \eqref{CBFS_1} and \eqref{CBFS_2} into \eqref{ECBFs_01} and \eqref{ECBFs_02} results in
\begin{alignat}{6}
&& \ddot{h}_{\beta,\alpha}^{d}&+(\lambda+\omega)\,\dot{h}_{\beta,\alpha}^{d}&+\lambda\,\omega\,h_{\beta,\alpha}^{d}&\geq0\label{ECBFs_01ver2}\\
&& \ddot{h}_{\gamma,\alpha}^{h}&+(\lambda+\omega)\,\dot{h}_{\gamma,\alpha}^{h}&+\lambda\,\omega\,h_{\gamma,\alpha}^{h}&\geq0\label{ECBFs_02ver2}
\end{alignat}
for every $(\alpha,\beta,\gamma)\in\mathcal{I}$. From \eqref{coupled_dyn}, we remark that \eqref{ECBFs_01ver2} and \eqref{ECBFs_02ver2} can be expressed as affine inequalities in terms of the the robot torques and leash force $(u^{d},F)$. This can be expressed as follows
\begin{alignat}{6}
&& &A_{\beta,\alpha}^{d}\left(x^{d}\right)\begin{bmatrix}
u^{d}\\
F
\end{bmatrix}&+b_{\beta,\alpha}^{d}\left(x^{d}\right)&\geq0\label{ECBFs_01ver3}\\
&& &A_{\gamma,\alpha}^{h}\left(x^{h}\right)F&+b_{\gamma,\alpha}^{h}\left(x^{h}\right)&\geq0\label{ECBFs_02ver3}
\end{alignat}
for all $(\alpha,\beta,\gamma)\in\mathcal{I}$. Next, we set up the following real-time QP to ensure safety-critical constraints while being close to the baseline controllers
\begin{alignat}{6}
\min_{(u^{d},F)} &\,\,\,\left\|u^{d}-\Gamma^{d}\left(x^{d},F\right)\right\|_{2}^{2}+\left\|F-F_{b}\left(r,\dot{r},\theta,\dot{\theta},\kappa\right)\right\|_{2}^{2}\nonumber\\
\textrm{s.t.}    &\,\,\,A_{\beta,\alpha}^{d}\left(x^{d}\right)\begin{bmatrix}
u^{d}\\
F
\end{bmatrix}+b_{\beta,\alpha}^{d}\left(x^{d}\right)\geq0,\,\,\forall(\alpha,\beta,\gamma)\in\mathcal{I}\nonumber\\
&\,\,\,A_{\gamma,\alpha}^{h}\left(x^{h}\right)F+b_{\gamma,\alpha}^{h}\left(x^{h}\right)\geq0,\quad\,\,\,\forall(\alpha,\beta,\gamma)\in\mathcal{I}\nonumber\\
&\,\,\,u_{\min}\leq{u^{d}}\leq{}u_{\max}\nonumber\\
&\,\,\,F_{\min}\leq{F}\leq{}F_{\max}\label{QP_otptimization},
\end{alignat}
where $u_{\min}$, $u_{\max}$, $F_{\min}$, and $F_{\max}$ denote the lower and upper bounds for the torques and forces. We remark that according to the construction procedure of the baseline controller in \eqref{output_dyn_0} and \eqref{HZD_controllers}, $b^{d}_{v}(x^{d},F)$ and $\Gamma^{d}_{v}(x^{d},F)$ are affine in terms of the leash force $F$ for every $v\in\mathcal{V}^{d}$. Hence, the cost function in \eqref{QP_otptimization} is indeed quadratic in terms of $(u^{d},F)$. The output of the QP framework are eventually employed as the control inputs for the robotic dog and as well as the leash.

\begin{remark}
In the QP formulation \eqref{QP_otptimization}, one would need to measure the human state variables $x^{h}$ to check for the inequality constraints \eqref{ECBFs_02ver3}. However, we do \textit{not} modify the torques for the human model. This assumption is not restrictive as one can measure the human state variable via 1) a set of wearable inertial measurement units (IMUs) and 2) asymptotic observers. In particular, our previous work \cite{Hamed_Ames_Gregg_ACC} has developed a systematic approach for asymptotic estimation of the state variables for human models via hybrid observers and IMUs. For the purpose of this paper, we hence assume that $x^{h}$ is available for the QP framework.
\end{remark}


\vspace{-1em}
\section{NUMERICAL SIMULATIONS AND RESULTS}
\label{NUMERICAL SIMULATIONS AND RESULTS}
\vspace{-0.3em}

The objective of this section is to numerically validate the theoretical results of the paper. For this purpose, we consider a complex and full-order hybrid dynamical model that describes the cooperative locomotion of Vision 60 and a human model.

\begin{figure*}[!t]
\centering
\subfloat[\label{COM_trajectories_no_leash}]{\includegraphics[width=2.2in]{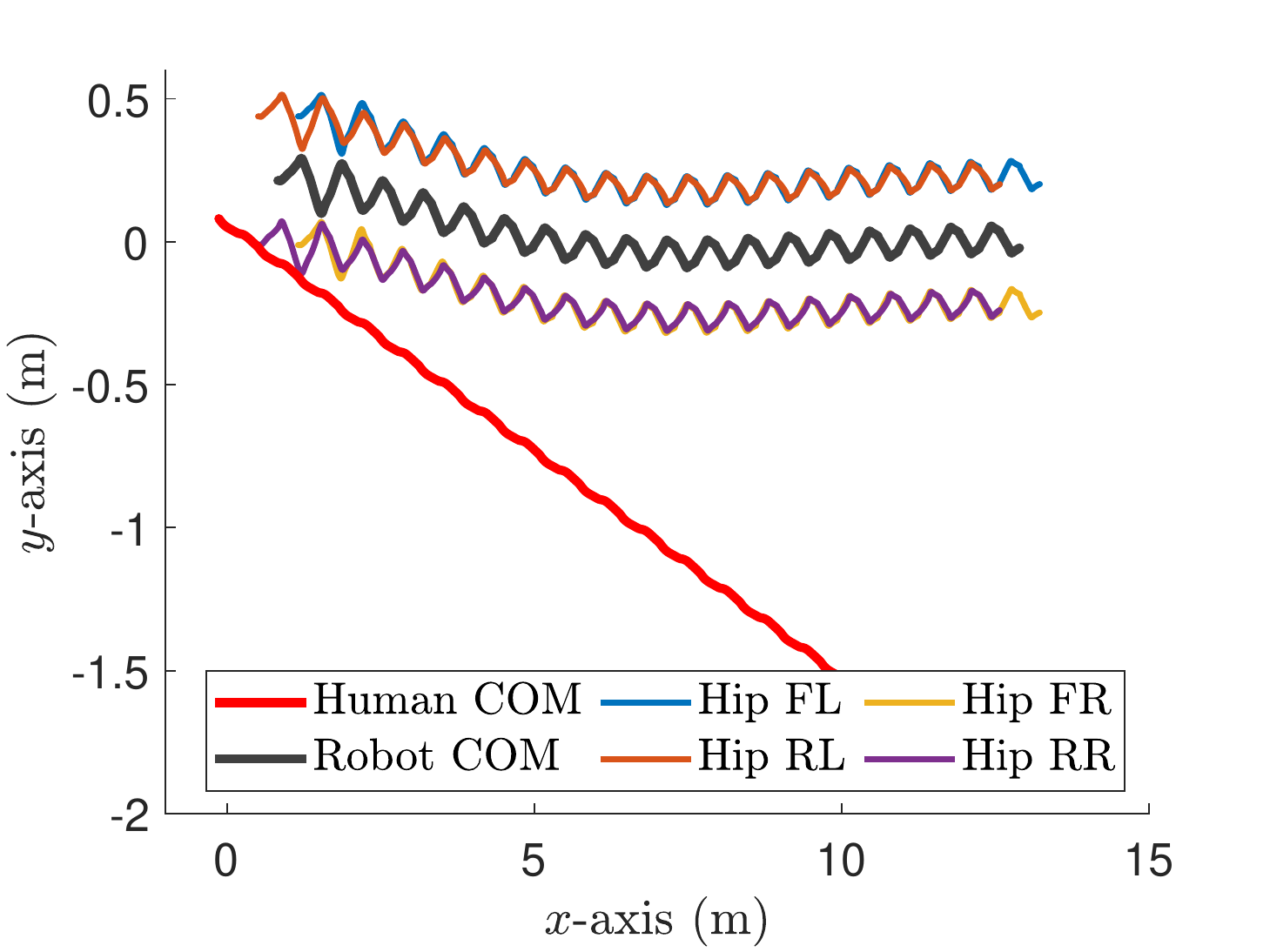}}\!\!
\subfloat[\label{COM_trajectories_with_leash}]{\includegraphics[width=2.2in]{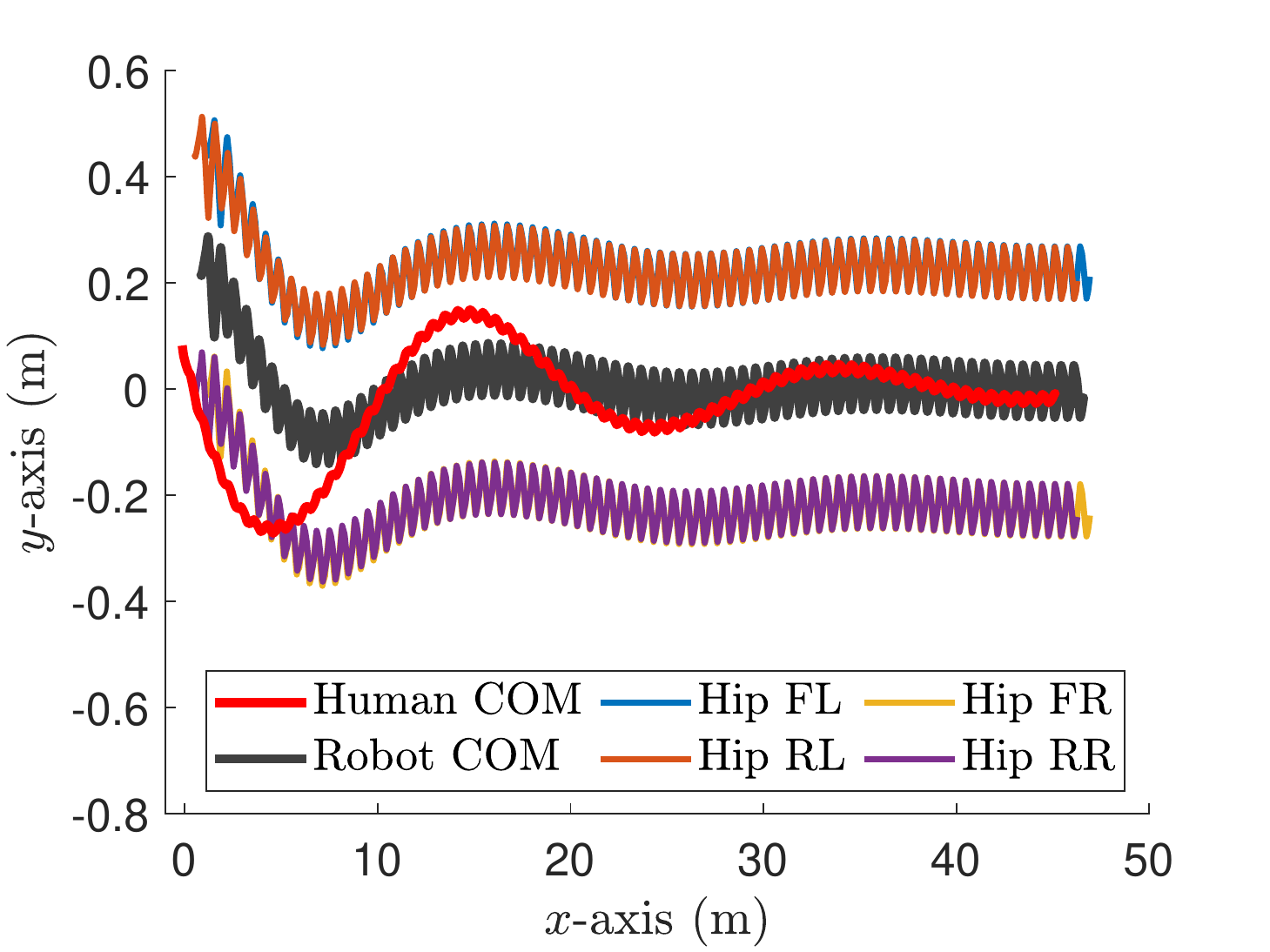}}\!\!
\subfloat[\label{COM_trajectories_point_obstacles_CBFs}]{\includegraphics[width=2.2in]{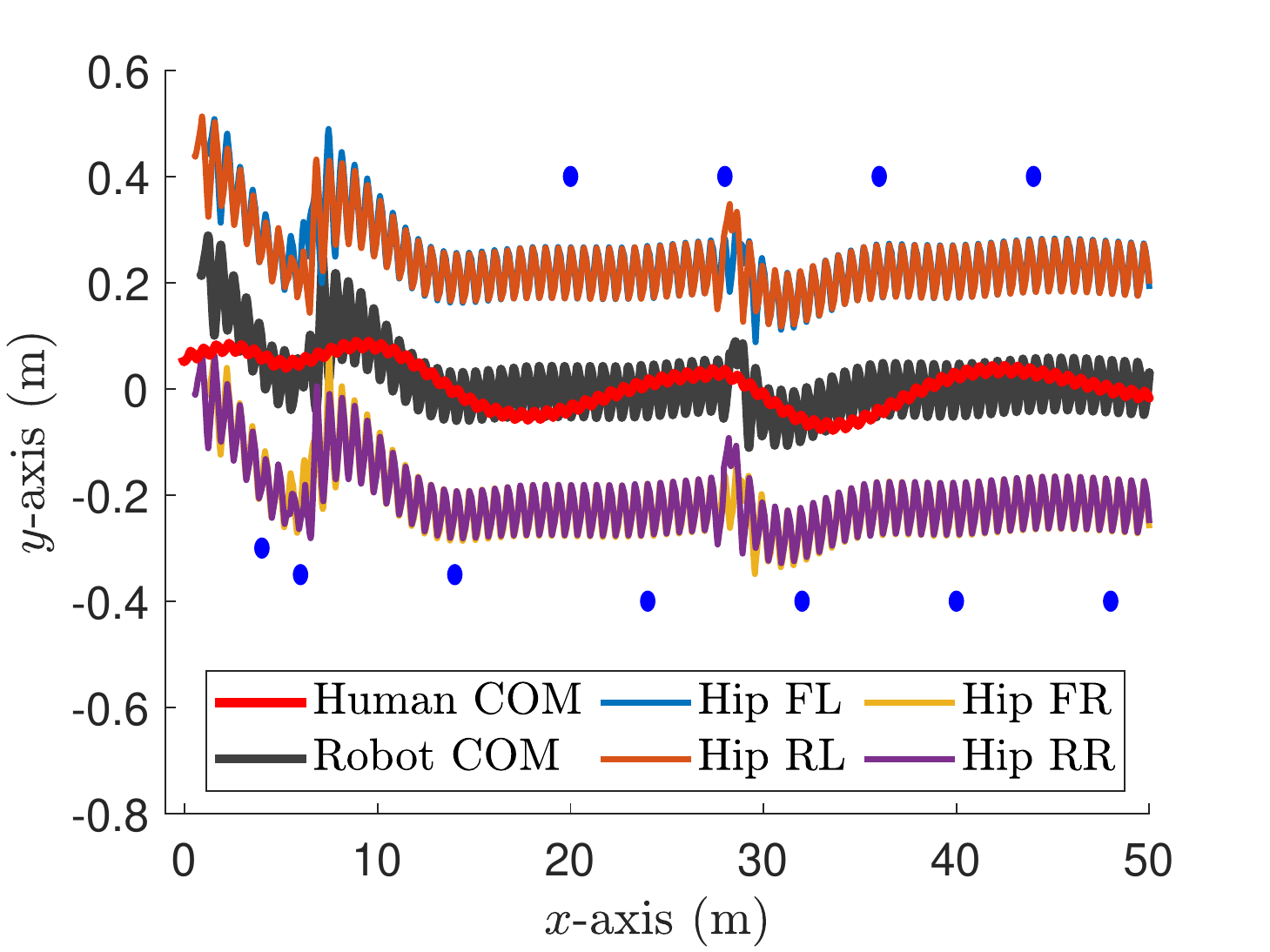}}
\vspace{-0.7em}
\caption{(a) Robot and human COM trajectories in the $xy$-plane without using the leash structure. The unleashed gait for the dog is exponentially stable (i.e, it walks along a line parallel to the $x$-axis on which the yaw angle is zero). However, the one for the human is modulo yaw stable. (b) COM trajectories using the leash structure. Here the leash and each agent have its own local baseline controllers and there is \textit{no} CBF-based QP optimization. Both the robot and human converge to a complex gait with a common speed while having yaw stability. (c) COM trajectories using the proposed hierarchical control strategy for the dog and leash structure in the presence of point obstacles. The obstacles are illustrated by the circles.}
\vspace{-1.0em}
\end{figure*}

\begin{figure*}[!t]
\centering
\subfloat[\label{Yaw_Roll_Motion_pointobstacles_CBFs:a}]{\includegraphics[width=2.2in]{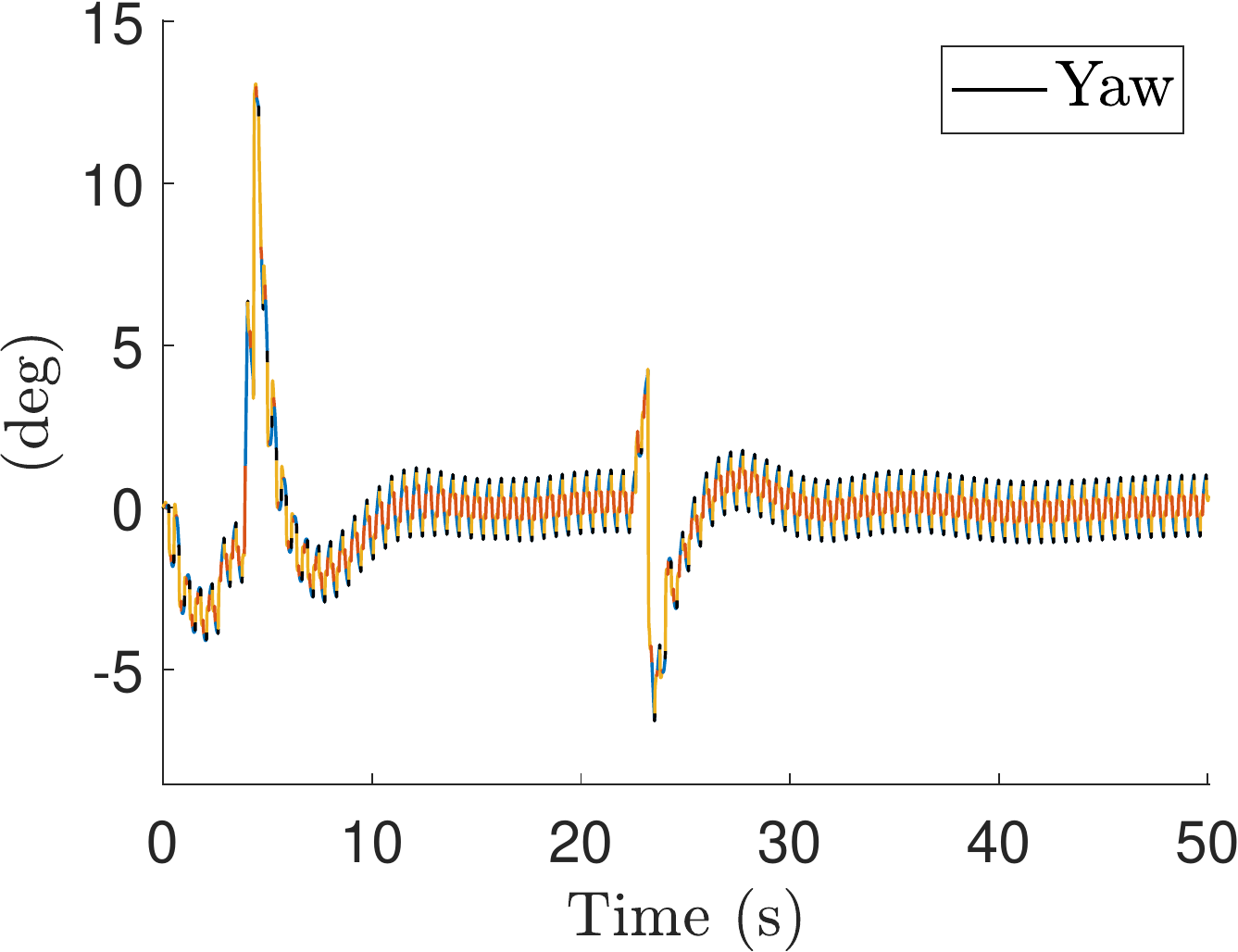}}\!\!
\subfloat[\label{Yaw_Roll_Motion_pointobstacles_CBFs:b}]{\includegraphics[width=2.2in]{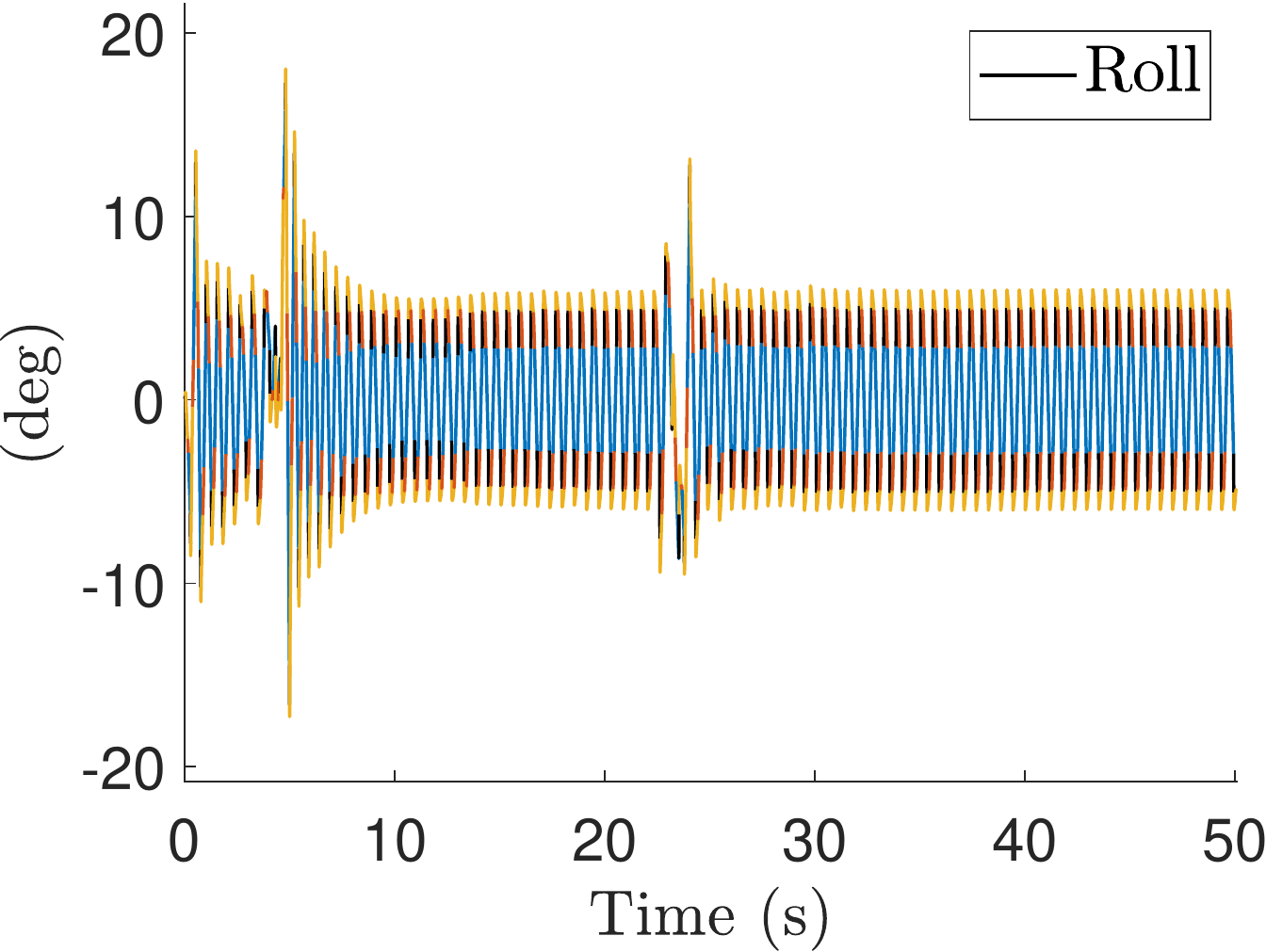}}\!\!
\subfloat[\label{COM_trajectories_point_obstacles_CBFs_moredense}]{\includegraphics[width=2.2in]{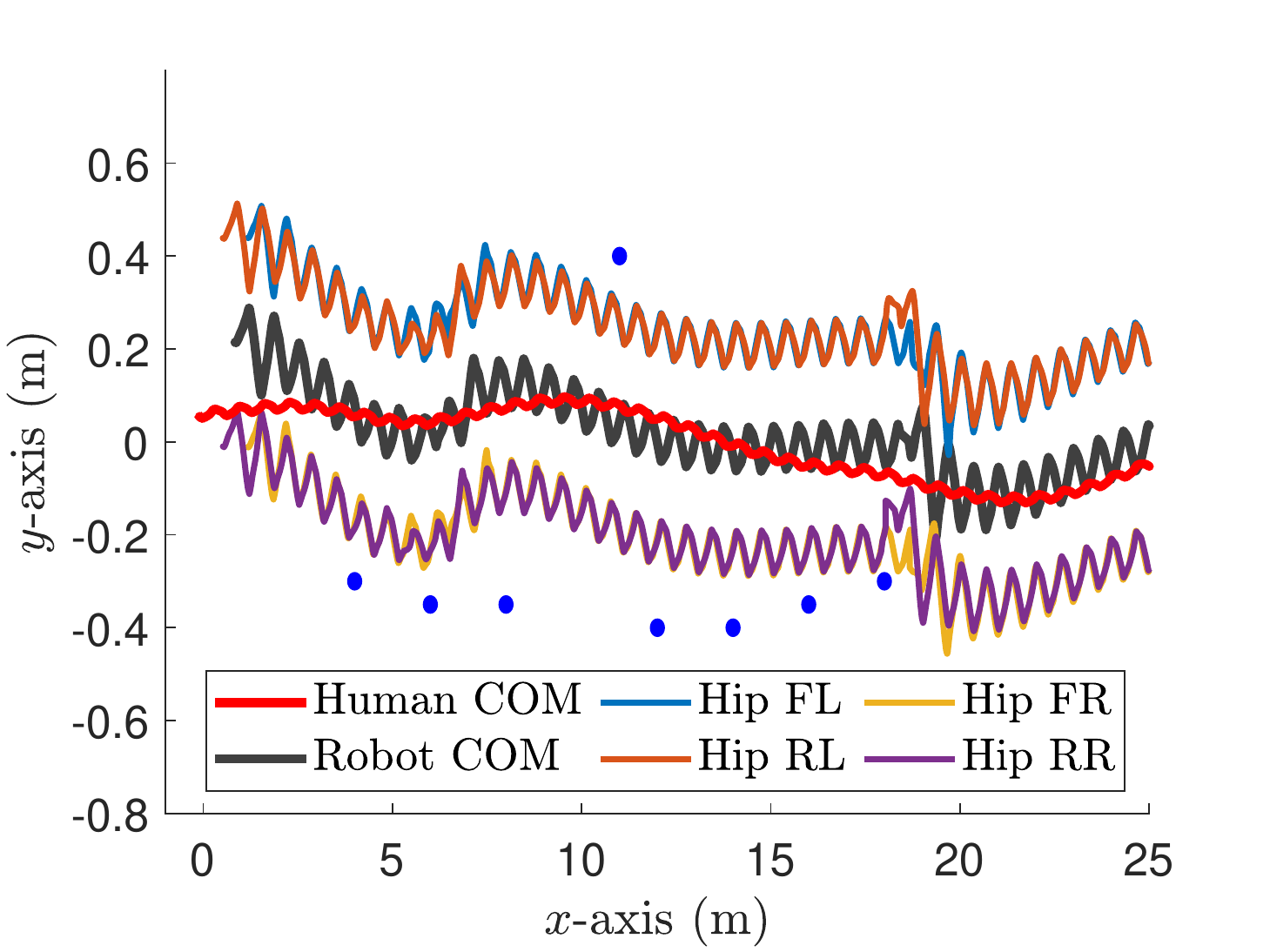}}
\vspace{-0.7em}
\caption{(a) and (b) Time profile of the yaw and roll motions for the dog robot using the proposed hierarchical control strategy in the presence of point obstacles. (c) COM trajectories in the $xy$-plane in the presence of a more-dense set of obstacles.}
\vspace{-1.0em}
\end{figure*}

\noindent\textbf{Vision 60 Robot:} Vision 60 is an autonomous quadrupedal robot manufactured by Ghost Robotics \cite{Ghost_Robotics}. It weighs approximately 26 kg. Vision 60 has 18 DOFs of which 12 leg DOFs are actuated. More specifically, each leg of the robot consists of a 1 DOF actuated knee joint with pitch motion and a 2 DOF actuated hip joint with pitch and roll motions. In addition, 6 DOFs are associated with the translational and rotational motions of the torso.

\noindent\textbf{Human Model:} The human model consists of a rigid tree structure with a torso link, including hands and head, and two identical legs terminating at point feet (see \cite{Hamed_Gregg_decentralized_control_IEEE_CST}). Each leg of the robot includes 3 actuated joints: a 2 DOF hip (ball) joint with roll and pitch motions and a 1 DOF knee joint in the sagittal plane. The model has 12 DOFs: 6 DOF for the translational and rotational motions of the torso and 6 DOF for the internal shape variables. The kinematic and dynamic parameter values for the links are
taken according to those reported in \cite{Human_Data} from a human cadaver study.

\noindent\textbf{Path Planning:} We consider an unleashed trotting gait $\mathcal{O}_{\textrm{ul}}^{d}$ for the dog robot at the speed of $1.2$ (m/s). To generate the gait, we make use of FROST (Fast Robot Optimization and Simulation Toolkit) --- an open-source toolkit for path planning of dynamic legged locomotion \cite{FROST,Ames_DURUS_TRO}. FROST makes use of direct collocation based trajectory optimization. In particular, it utilizes the Hermite-Simpson collocation approach to translate the path planning problem into a finite-dimensional nonlinear programming (NLP) that can be effectively solved with state-of-the-art NLP tools such as IPOPT. A desired periodic bipedal gait $\mathcal{O}_{\textrm{ul}}^{h}$ is designed for the locomotion of the human model at the speed of $1.1$ (m/s). We intentionally design the human gait to be slower than that of the dog to show that using the proposed control strategy, there can be a common speed leashed gait.

\noindent\textbf{Local Baseline Controllers:} Using the semidefinite programming algorithm of \cite{Hamed_Buss_Grizzle_BMI_IJRR,Hamed_Gregg_decentralized_control_IEEE_CST}, we synthesize the virtual constraint controllers of \eqref{HZD_controllers} in an offline manner to exponentially stabilize the unleashed gaits for the dog and human models. In particular, the algorithm looks for the optimal outputs to be regulated such that the stability condition in Assumption \ref{Transvsersal Stable Periodic Orbits} is satisfied. We further do not consider the full state stability for the human gait. Instead, we consider the \textit{stability modulo yaw} \cite[Section 6.5]{Hamed_Buss_Grizzle_BMI_IJRR} to have a model of visually impaired people locomotion. We remark that the dog robot together with the leash structure will have the responsibility to stabilize the yaw motion for itself as well as the human. The leash baseline controller is further designed to keep the human in the safe zone of $[1.25,1.75]$ (m). Figures \ref{COM_trajectories_no_leash} and \ref{COM_trajectories_with_leash} depict the robot and human center of mass (COM) trajectories in the $xy$-plane without and with using the leash structure, respectively. We remark that without the leash, the human gait does \textit{not} have the yaw stability (see Fig. \ref{COM_trajectories_no_leash}). However, utilizing the leash structure, the robot and human trajectories converge to a complex gait with the same speed while having the yaw stability (i.e., locomotion along the $x$-axis on which the yaw angle is zero) (see Fig. \ref{COM_trajectories_with_leash}).

\noindent\textbf{Obstacle Avoidance:} In order to demonstrate the power of the proposed hierarchical control algorithm, we consider a set of point obstacles $\mathscr{P}_{\alpha}^{o}$ for some $\alpha$ in the discrete set $\mathcal{I}^{o}$. The critical points on the robot and human (i.e., $\mathscr{P}^{d}_{\beta}$ and $\mathscr{P}_{\gamma}^{h}$) are then chosen as the hip points. In the first simulation, we consider $11$ obstacles around the steady-state trajectory of Fig. \ref{COM_trajectories_with_leash}. Without employing the real-time QP-based modification, the robot and human COM can hit the obstacles. In particular, Fig. \ref{COM_trajectories_with_leash} illustrates an undershoot around $-0.3$ (m) along the $y$-axis for the human COM that can easily collide with the obstacle located there in Fig. \ref{COM_trajectories_point_obstacles_CBFs}. However, utilizing the hierarchical control algorithm with QP running at 1kHz, the robot and human trajectories are locally modified around the steady-state gait such that the safety critical conditions are satisfied (see Fig. \ref{COM_trajectories_point_obstacles_CBFs}). The time profile for the robot's yaw and roll motions to accommodate the obstacles is depicted in Figs. \ref{Yaw_Roll_Motion_pointobstacles_CBFs:a} and \ref{Yaw_Roll_Motion_pointobstacles_CBFs:b}. The performance of the closed-loop hybrid system in the presence of a more-dense set of obstacles is shown in \ref{COM_trajectories_point_obstacles_CBFs_moredense}. Figure \ref{Snapshots} illustrates the snapshots of the robot and human locomotion around the obstacles. Animations can be found online \cite{YouTube_CooperativeLoco}.

\begin{figure*}[!t]
\vspace{1em}
\includegraphics[width=1\linewidth]{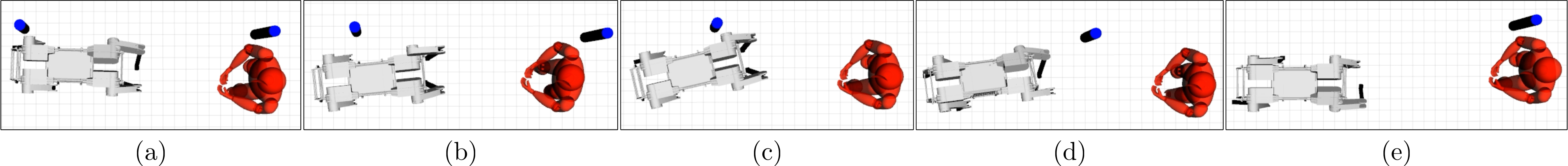}
\vspace{-1.4em}
\caption{Simulation snapshots illustrating the evolution ((a) to (e)) of Vision 60 and human trajectories on having a close encounter with the obstacles. The visualization does not illustrate the actuated leash. However, it's effect is clearly demonstrated by the augmented human trajectory in figures (see \cite{YouTube_CooperativeLoco} for the animation).}
\label{Snapshots}
\vspace{-2em}
\end{figure*}


\vspace{-1em}
\section{CONCLUSION}
\vspace{-0.1em}

This paper presented a formal method towards 1) addressing complex hybrid dynamical models that describe cooperative locomotion of guide legged robots and humans and 2) systematically designing hierarchical control algorithms that enable stable and safe collaborative locomotion in the presence of discrete obstacles. At the higher level of the proposed control strategy, baseline controllers are assumed for the robotic dog and the leash structure. The robot baseline controller is developed based on HZD approach to asymptotically stabilize a pre-designed unleashed gait for the quadrupedal robot. The leash baseline controller is further developed to keep the human in a safe distance from the dog while following it. The existence and exponential stability of leashed gaits for the complex model are investigated via the Poincar\'e return map. At the lower level, a real-time QP is solved to modify the baseline controllers for the robot as well as the leash to ensure safety (i.e., obstacles avoidance) via CBFs. The power of the analytical approach is validated through extensive numerical simulations of a complex hybrid model with $60$ state variables and $20$ control inputs that represents the cooperative locomotion of Vision 60 and a human model. We considered an unleashed trotting gait for the dog and a bipedal gait for the human, where the dog gait is assumed to be faster. We further assumed that the human gait does not have yaw stability. It is shown that using the proposed control strategy, the dog and human can reach a common speed for the leashed motion. Moreover, we demonstrated that the robot can stabilize the yaw motion for the human model. The proposed approach can locally guarantee safety around pre-designed unleashed trajectories. The QP framework can significantly reduce the overshoot and undershoot in the human COM trajectories for following the guide robot. For future research, we will improve control algorithms to address sharp turns around corners and obstacles. We will extend the approach to consider dynamic obstacles. We will also investigate robust hierarchical approaches to address cooperative locomotion over uneven terrains.


\vspace{-0.8em}
\bibliographystyle{IEEEtran}
\bibliography{references}

\end{document}